\documentclass{amsart}
\parskip 5pt
\textwidth 5in

\usepackage{tikz}
\usepackage{pgfplots}
\pgfplotsset{width=10cm,compat=1.9}
\usepackage[export]{adjustbox}
\usepackage{caption}
\usepackage{subcaption}
\usepackage{wrapfig}
\usepackage{graphicx}
\usepackage{mathtools}

\usepackage{amssymb, latexsym, amsmath}

\usepackage[all]{xy}

\usepackage{enumerate}
\usepackage{mathrsfs} 

\usepackage[OT2,T1]{fontenc}
\DeclareSymbolFont{cyrletters}{OT2}{wncyr}{m}{n}
\DeclareMathSymbol{\Sha}{\mathalpha}{cyrletters}{"58}

\theoremstyle{plain}
\newtheorem{Theorem}[subsection]{Theorem}

\newtheorem{Corollary}[subsection]{Corollary}

\newtheorem*{Conjecture*}{Conjecture}
\newtheorem{Proposition}[subsection]{Proposition}

\theoremstyle{definition}
\theoremstyle{remark}

\newtheorem{Example}[subsection]{\bf Example}

\newtheorem{Remark}[subsection]{\bf Remark}

\numberwithin{equation}{subsubsection}

\renewcommand{\k}{{\kappa}}

\newcommand{\F}{{\mathbb F}}
\newcommand{\G}{{\text{\rm G}}}

\renewcommand{\H}{{\text{\rm H}}}

\newcommand{\N}{{\mathbb N}}
\renewcommand{\O}{{\text{\rm O}}}

\newcommand{\Q}{{\mathbb Q}}

\newcommand{\X}{{\text{\rm X}}}

\newcommand{\Z}{{\mathbb Z}}

\newcommand{\ab}{{\text{\rm ab}}}

\newcommand{\cor}{{\text{\rm cor}}}

\renewcommand{\gcd}{{\text{\rm gcd}}}

\newcommand{\ind}{{\text{\rm ind}}}

\newcommand{\inv}{{\text{\rm inv}}}
\newcommand{\isim}{{\;\overset{\sim}{\longrightarrow}\;}}
\newcommand{\isom}{{\;\simeq\;}}
\renewcommand{\ker}{{\text{\rm ker}}}
\newcommand{\lcm}{{\text{\rm lcm}}}

\newcommand{\per}{{\text{\rm per}}}

\newcommand{\res}{{\text{\rm res}}}

\newcommand{\Br}{{\text{\rm Br}}}

\newcommand{\Hom}{{\text{\rm Hom}}}

\newcommand{\Nm}{{\text{\rm N}}}

\begin{document}
\small

\title[Noncyclic Division Algebras]
{Noncyclic Division Algebras over Fields of\\ Brauer Dimension One}

\author{Eric Brussel}
\address 
{Department of Mathematics\\
California Polytechnic State University\\
San Luis Obispo, CA 93405\\ USA}
\email{ebrussel@calpoly.edu}

\begin{abstract}
Let $K$ be a complete discretely valued field of rank one, with residue field $\Q_p$.
It is well known that period equals index in $\Br(K)$.
We prove that when $p=2$ there exist noncyclic $K$-division algebras 
of every $2$-power degree divisible by four. Otherwise, every $K$-division algebra is cyclic.
\end{abstract}
\maketitle

\section{Introduction}
Recent research has focused on formulating and computing arithmetic invariants
in the Brauer group, over fields of cohomological dimension greater than $1$.
Two of the most prominent measures are {\it Brauer dimension} and {\it cyclic length}.
Famous fields of Brauer dimension one include local fields and global fields, 
and both have cyclic length one.
Because of the local-global principle and the degree-criterion for splitting
over local fields, one proves both ``cyclicity'' and ``period equals index'' for global fields at the same time.
Which raises the question: If $K$ is a field of 
Brauer dimension one, is it like a global field in this respect, that every $K$-division algebra is cyclic?
We show that the answer is no.

A field $K$ has {\it Brauer dimension one} if the Brauer group $\Br(K)$ is not trivial,
and every element in $\Br(K)$ of period $n$ is represented by a central simple algebra of degree $n$, for any $n$.
In other words, every class has equal period and index.
$K$ has {\it cyclic length one} if every class of period $n$ is represented by a {\it cyclic}
central simple algebra of degree $n$, for any $n$.
All $K$-division algebras are then cyclic crossed products.

When $K$ contains the $n$-th roots of unity, the theorem of Merkurjev-Suslin \cite{MS83} guarantees
that ${}_n\Br(K)$ is generated by cyclic classes of degree $n$,
and the cyclic length is called the {\it symbol length}.
This has been studied extensively;
see \cite{Am82} and \cite{ABGV} for a discussion of generation by cyclic classes, 
and \cite{Tig84} for fundamental results on symbol length.
Cyclic length is interesting not only as a measure of arithmetic complexity,
but also because of the central open problem in the area of finite-dimensional division algebras,
which is to determine whether every division algebra of prime degree is cyclic. 

Recent work on Brauer dimension 
includes \cite{Ant11}, \cite{AJ95}, \cite{Chip18}, \cite{Clark10}, \cite{dJ04}, \cite{HHK09}, \cite{Lieb11}, \cite{Lieb15}, 
\cite{PS10}, \cite{PS14}, and \cite{Sa97}.
Two-dimensional local fields, such as $\Q_p((t))$ and $\F_p((s))((t))$, 
were shown to have Brauer dimension one in \cite{Chip18},
and $\F_p((s))((t))$ was shown to have cyclic length one in \cite{AJ95}. 
The Brauer dimension of the function field of a $p$-adic curve
was proved to be two in \cite{Sa97}, and shown to have cyclic length two in \cite{BMT16}.
See \cite{BK00}, \cite{Ma16}, and \cite{Kra16} for known results on the relationship 
between Brauer dimension and symbol length. 

In this paper we show that the field $K=\Q_2((t))$
has Brauer dimension one and cyclic length two.
This is the first example of a field $K$ whose Brauer dimension is proved to be less than its cyclic length.
The testifying noncyclic division algebras are of every $2$-power degree divisible by four.
Noncyclic division algebras of equal period and degree themselves are not new, and were constructed
in the early 1930's by Albert, but over fields of Brauer dimension at least three \cite[Lemma 10]{Albert:deg4eg}.
Our examples exist in a very small space, 
in the sense that
they seem to depend on the noncyclicity of the cyclotomic extension $\Q_2(\mu_8)/\Q_2$,
they all have small cyclic splitting fields, and all $\Q_2((t))$-division algebras of odd degree are cyclic,
as are all $\Q_p((t))$-division algebras for odd $p$.
\section{Notation and Background}\label{background}

\subsection{For Fields of Characteristic Zero}
Let $K$ be a field of characteristic zero.
Let $\X(K)$ denote the character group, 
and $\Br(K)$ the Brauer group.
If $\delta\in\Br(K)$, the period $\per(\delta)$ is the order in $\Br(K)$,
and the index $\ind(\delta)$ is the smallest degree of a splitting field, equal
to the degree of the $K$-division algebra underlying $\delta$.
If $\delta\in\Br(K)$ and $E/K$ is a field extension, write $\delta_E$ for the restriction from $K$ to $E$.
If $\psi\in\X(K)$ has order $n$, 
let $\k(\psi)/K$ denote the cyclic field extension of degree $n$ fixed by $\ker(\psi)$.
If $E/K$ is a finite field extension, write $\psi_E$ for the restriction,
then $\k(\psi_E)$ is the corresponding cyclic extension of $E$, which is the compositum
of $\k(\psi)$ and $E$ inside some algebraic closure.

Let $\mu_n$ and $\mu$ denote the roots of unity groups, with
$\mu_n(K)$ and $\mu(K)$ the corresponding roots of unity in $K$.
Let $\zeta_n$ denote a primitive $n$-th root of unity, and put $i=\zeta_4$.
By Hilbert 90 we have an isomorphism $K^\times/K^{\times n}\isom\H^1(K,\mu_n)$;
let $(a)_n\in\H^1(K,\mu_n)$ denote either the class of $a\pmod{K^{\times n}}$,
or the cocycle $\G_K\to\mu_n$ defined by $\sigma\mapsto(a^{1/n})^{\sigma-1}$.
Since $-1\in K$ and $\mu_2$ is canonically identified with $\{\pm 1\}$,
we may identify $(a)_2$ with the character of order $2$.
We will cite {\it Albert's norm criterion},
which says a character $\psi$ extends to one of order $2|\psi|$ if and only
if $-1\in\Nm(\k(\psi)/K)$. This appears in \cite{AFSS}.

Let $(a,b)\in\Br(K)$ denote the standard {\it quaternion class},
for $a,b\in K$,
and put $(a,b)=1$ or $(a,b)=-1$, depending on whether the class is trivial or nontrivial.
A {\it cyclic class of degree $n$} is a Brauer class of the form $\theta\cup(b)_n$
where $\theta\in\X(K)$, $b\in K^\times$, and $|\theta|=n$. 
It then represents a cyclic central simple algebra of degree $n$. 
If $\ind((\theta,b))=|\theta|$, the division algebra underlying $(\theta,b)$ is a cyclic crossed product,
with maximal subfield $\k(\theta)$.
By the norm criterion for cyclic classes, $(a,b)=1$ if and only if $b\in\Nm(K(\sqrt a)/K)$, and 
$(\theta,b)=0$ if and only if $b\in\Nm(\k(\theta)/K)$.

For $n\in\N$, the {\it $n$-cyclic length} of $K$ is zero if $\Br(K)$ has no elements of period $n$,
and otherwise it is the minimum number of cyclic classes of degree $n$ 
needed to express a general element of $\Br(K)$ of period $n$. We use $\infty$ if there is no such expression.
The {\it cyclic length} of $K$ is zero if $\Br(K)=\{0\}$, and otherwise it is the supremum of the $n$-cyclic lengths for all $n$.
The {\it $n$-Brauer dimension} of $K$ is zero if $\Br(K)$ has no elements of period $n$,
and otherwise it is the smallest number $d$ such that 
$\ind(\delta)$ divides $n^d$ for all $\delta\in\Br(K)$ of period $n$.
The {\it Brauer dimension} of $K$ is zero if $\Br(K)=\{0\}$, and otherwise it is the supremum 
of the $n$-Brauer dimensions over all $n$.
Since a class of period $n$ and $n$-cyclic length $d$ has index at most $n^d$,
the $n$-Brauer dimension is bounded by the $n$-cyclic length. 

\subsection{For Complete Discretely-Valued Fields}\label{dvfbackground}
Suppose $K$ is complete with respect to a discrete rank-one valuation, with residue field $k$
of characteristic zero, valuation ring $\O_K$, and uniformizer $t$.
Let $\mu_n^*=\Hom(\mu_n,\Q/\Z)$ be the Pontrjagin dual, then $\H^0(\mu_n,\Q/\Z)=(\mu_n^*)^{\G_k}$
is a subgroup of $\mu_n^*$ of order $|\mu_n(k)|$, and we have a split residue sequence
\[0\longrightarrow{}_n\X(k)\longrightarrow{}_n\X(K)\longrightarrow(\mu_n^*)^{\G_k}\longrightarrow 0\]
Thus every $\chi\in{}_n\X(K)$ is expressible as a sum $\chi=\psi+\omega^*\circ(t)_n$
for $\psi$ defined over $k$ and $\omega^*:\mu_n\to\Q/\Z$ of some order $e$ dividing $|\mu_n(k)|$. 
If $L=\k(e\chi)$ then $\chi_L=\omega^*\circ(vt)_n$ for some $v\in L$ by Kummer Theory, and then
$\k(\chi)=L(\root e\of{vt})$ has $t$-ramification degree $e$.

The split residue sequence 
\[0\longrightarrow\Br(k)\longrightarrow\Br(K)\longrightarrow\X(k)\longrightarrow 0\]
lets us express every $\delta\in\Br(K)$ in the form
$\delta=\alpha+(\theta,t)$, where $\alpha\in\Br(k)$ and $\theta\in\X(k)$ are identified with
their inflations to $K$.
We call this the {\it Witt decomposition} (with respect to $t$).
The {\it splitting criterion} states that
$\delta$ is split by a finite $t$-unramified field extension $E/k$ if and only if 
$E$ contains $\k(\theta)$ and splits $\alpha$, and
we have an {\it index formula} (see e.g. \cite{Br95})
\[\ind(\delta)=|\theta|\ind(\alpha_{\k(\theta)})\]

\subsection{For Local Fields}
Let $k$ be a local field of characteristic zero, and let $\zeta_m$ be a generator of $\mu(k)$.
The reciprocity map $\rho:k^\times\to\G_k^\ab$ induces an isomorphism \[\rho^*:\X(k)\isim(k^\times)^*\]
by Pontrjagin duality. For each $\theta\in\X(k)$ we set $\theta^*=\rho^*(\theta)$. 

If $k$ has residue field $\F_q$, Wedderburn's Theorem $\Br(\F_q)=0$
transforms the residue sequence into an isomorphism from $\Br(k)$ to $\X(\F_q)$,
and its composition with the evaluation at Frobenius yields the
{\it invariant map}
\[\inv_k:\Br(k)\isim\Q/\Z\]
Then \[\inv_k(\theta,b)=\theta^*(b)\] by \cite[Ch.XIV, Prop.3]{Se79} and Tate Duality.
In particular, $|\theta^*(b)|$ equals the order of $b$ in $k^\times/\Nm(\k(\theta)/k)$.

The decomposition $k^\times\isom\mu(k)\times\Z\times\Z_p^N$ induces a split exact sequence
\[0\longrightarrow(\Z\times\Z_p^N)^*\longrightarrow(k^\times)^*\longrightarrow\mu(k)^*\longrightarrow 0\]
Since $(\Z\times\Z_p^N)^*$ is divisible,
it follows that a character $\theta\in\X(k)$ of prime-power order extends to a character of any (larger)
prime-power order if and only if $\theta^*(\zeta_m)=0$, if and only if $\zeta_m\in\Nm(\k(\theta)/k)$.

\section{Results}

\begin{Proposition}
Let $K$ be a complete discretely valued field of rank one with residue field $k=\Q_p$.
Then $K$ has cyclic length at most two, and Brauer dimension one.
\end{Proposition}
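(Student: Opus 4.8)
The plan is to work from the Witt decomposition $\delta=\alpha+(\theta,t)$, with $\alpha\in\Br(\Q_p)$ and $\theta\in\X(\Q_p)$ inflated to $K$, and to dispose of the Brauer dimension first, since it is routine. From the index formula $\ind(\delta)=|\theta|\,\ind(\alpha_{\k(\theta)})$ and the fact that $\k(\theta)$ is a local field — where period equals index, and where restriction from $\Q_p$ multiplies the invariant by $[\k(\theta):\Q_p]=|\theta|$ — one gets $\ind(\alpha_{\k(\theta)})=\per(\alpha)/\gcd(\per(\alpha),|\theta|)$, hence $\ind(\delta)=\lcm(|\theta|,\per(\alpha))$. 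Since the residue sequence for $\Br(K)$ splits, $\per(\delta)=\lcm(\per(\alpha),|\theta|)$ as well, so $\ind(\delta)=\per(\delta)$; and $\Br(K)\ne 0$, so the Brauer dimension is one.

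For the cyclic length, set $n=\per(\delta)$. First I would reduce to $n=\ell^a$ a prime power: decompose $\delta$ into primary components, bound the cyclic length of each, and fuse pieces of coprime prime-power degree back together using a Chinese Remainder argument on norm groups in $K^\times$. Next, since $\X(\F_p)\cong\Q/\Z$ is divisible there is an unramified $\psi\in\X(\Q_p)$ of order $n$, and since ${}_n\Br(\Q_p)$ is generated by the cyclic classes $\psi\cup(\Q_p^\times)$ one has $\alpha=\psi\cup(c)$ for some $c$; thus $\alpha$ is already a cyclic class of degree $n$ over $K$, and it remains to realize $(\theta,t)$ — after modifying $\alpha$ by at most one further cyclic class of degree $n$ — as a single cyclic class of degree $n$. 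The device I would use: produce a cyclic extension $L=\k(\chi)$ of $K$ of degree $n$ and some $c'\in\Q_p^\times$ such that $L$ splits $\delta-\psi\cup(c')$; then $\Br(L/K)\cong K^\times/\Nm(L^\times)$ gives $\delta-\psi\cup(c')=\chi\cup(b)$, so $\delta=\psi\cup(c')+\chi\cup(b)$ is a sum of two cyclic classes of degree $n$.

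To build $L$, decompose $\theta=\theta^{\mathrm{ur}}+\theta^{\mathrm{tr}}+\theta^{\mathrm{wr}}$ into unramified, tamely ramified, and wildly ramified parts; the first and third lie in divisible subgroups of $\X(\Q_p)$ and so extend to order $n$, while $\theta^{\mathrm{tr}}$ lies in $\mu(\Q_p)^\vee$, of order dividing $|\mu(\Q_p)|$. If $\theta^{\mathrm{tr}}=0$, then $\k(\theta)$ embeds in a cyclic degree-$n$ extension $E/\Q_p$ — its character factors through the torsion-free part of $\Gal(\Q_p^{\mathrm{ab}}/\Q_p)$ — so $KE$ is a $t$-unramified cyclic degree-$n$ extension containing $\k(\theta)$ and splitting $\alpha$ (as $[E:\Q_p]=n\ge\per(\alpha)$), hence splits $\delta$ outright. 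If $\theta^{\mathrm{tr}}\ne 0$, I would take $\chi=\psi'+\omega^*\!\circ(t)_n$, where $\psi'\in\X(\Q_p)$ has order $n$ and its cyclic group contains $\theta^{\mathrm{ur}}+\theta^{\mathrm{wr}}$, and $\omega^*\colon\mu_n\to\Q/\Z$ has order $e$ equal to the $\ell$-part of $|\mu_n(\Q_p)|$ (so $\mu_e\subset\Q_p\subset K$); over $L=\k(\chi)$, which has $t$-ramification degree $e$ and some residue field $E_0$ finite over $\Q_p$, this choice makes $\delta_L$ inflated from $\Br(E_0)$ with period dividing $e$, and then $c'$ can be chosen by a local reciprocity computation (using that $\psi|_{E_0}$ is unramified of order $e$) to annihilate $\delta_L$.

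The main obstacle is this last case: arranging $\chi$ so that $\k(\chi)$ is at once wide enough in the unramified direction to bring $\alpha$ down to period at most $e$, $t$-ramified enough to absorb the non-extendable part $\theta^{\mathrm{tr}}$, and still to contain $\k(\theta^{\mathrm{wr}})$ so the wild part drops out — all inside a cyclic extension of degree exactly $n$. The room to do this is controlled by the roots of unity in $K$ (equivalently in $\Q_p$) and by the unramified extensions of $\Q_p$; and this is precisely where the prime matters, since for odd $p$ those unramified extensions acquire enough $\ell$-power roots of unity to carry everything in a single cyclic class, whereas for $p=2$ the non-cyclicity of $\Q_2(\mu_8)/\Q_2$ obstructs this and makes the second cyclic class unavoidable.
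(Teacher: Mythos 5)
Your Brauer dimension argument is exactly the paper's: the index formula together with $\ind(\alpha_{\k(\theta)})=\per(\alpha)/\gcd(\per(\alpha),|\theta|)$ from local class field theory gives $\ind(\delta)=\lcm(|\theta|,\per(\alpha))=\per(\delta)$, and $\Br(K)\neq 0$. That half is correct and matches the paper.

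For the cyclic length bound you have taken a far harder road than necessary, and the road does not reach the end. The paper's argument is a single observation: in the Witt decomposition $\delta=\alpha+(\theta,t)$, the summand $(\theta,t)$ is already a cyclic class, and $\alpha$ is a cyclic class because every division algebra over the local field $\Q_p$ is a cyclic (unramified) crossed product --- a fact you yourself record when you write $\alpha=\psi\cup(c)$ with $\psi$ unramified. At that point you are essentially done: $\delta$ is a sum of two cyclic classes, the only remaining bookkeeping being the degree-$d$ versus degree-$n$ adjustment (for $\psi$ this is immediate since the unramified characters form a divisible group; you are right that this wrinkle exists, but it is not where you spend your effort). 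Instead you insist on realizing $\delta-\psi\cup(c')$ as $\chi\cup(b)$ by producing a cyclic degree-$n$ extension $L=\k(\chi)$ splitting it and invoking $\Br(L/K)\cong K^\times/\Nm(L^\times)$. That strategy is legitimate in principle, but your construction of $L$ in the case $\theta^{\mathrm{tr}}\neq 0$ is not carried out: you flag it yourself as ``the main obstacle,'' and it genuinely is one. Concretely, when $\theta^{\mathrm{wr}}\neq 0$ the residue field $E_0=\k(\psi')$ is a ramified extension of $\Q_p$, so your assertion that $\psi|_{E_0}$ is unramified of order $e$ fails and the reciprocity computation that is supposed to produce $c'$ does not go through as stated; and for $p=\ell=2$ the existence of a cyclic $\chi$ of order exactly $n$ meeting all three of your requirements is precisely the kind of extendability question (Albert's criterion, $-1\in\Nm$) that the rest of the paper shows can fail. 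Since your proof of the bound ``at most two'' depends on this unconstructed $L$ even in the two-class case, the argument as written has a gap --- one that disappears entirely once you notice that the Witt decomposition already hands you the two cyclic classes.
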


\begin{proof}
This is well known. 
The Witt decomposition shows ${}_n\Br(K)$ is generated by $n$-cyclic classes, and its $n$-cyclic length is at most two.
Let $\delta=\alpha+(\theta,t)\in\Br(K)$.
By Local Class Field Theory (LCFT), $\ind(\alpha_{\k(\theta)})=\per(\alpha)/\gcd\{\per(\alpha),|\theta|\}$,
hence $\ind(\delta)=\per(\delta)=\lcm\{|\theta|,\per(\alpha)\}$ by the index formula.
Therefore $K$ has Brauer dimension one.
\end{proof}

\begin{Theorem}\label{main}
Let $K$ be a complete discretely valued field with residue field $\Q_2$,
Then there exist noncyclic $K$-division algebras of every $2$-power 
degree divisible by $4$.
\end{Theorem}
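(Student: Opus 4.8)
The plan is to construct, for each $n \geq 2$, an explicit Brauer class $\delta \in \Br(K)$ of period $2^n$ and index $2^n$ (so that it underlies a $K$-division algebra $D$ of degree $2^n$), and then to show directly that $D$ is noncyclic. The construction will use the Witt decomposition $\delta = \alpha + (\theta, t)$ with $\alpha \in \Br(\Q_2)$ and $\theta \in \X(\Q_2)$, exploiting the arithmetic of $\Q_2$ — in particular the failure of $\Q_2(\mu_8)/\Q_2$ to be cyclic, which is flagged in the introduction as the ultimate source of the phenomenon. First I would choose $\alpha$ and $\theta$ so that the index formula $\ind(\delta) = |\theta|\,\ind(\alpha_{\k(\theta)})$ from Section 2.2 gives $\ind(\delta) = 2^n = \per(\delta)$; by the Proposition just proved this is automatic once the periods and the LCFT formula are arranged, so the real content is in the choice that forces noncyclicity. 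The natural move is to pick $\theta$ of order $2$ (so $\k(\theta) = \Q_2(\sqrt{d})$ for a suitable $d$) and $\alpha$ of period $2^{n-1}$ whose restriction to $\Q_2(\sqrt d)$ still has period $2^{n-1}$ — LCFT tells us exactly when this happens in terms of $\theta^*(\zeta_m)$ and norms — so that $D$ has a degree-$2$ ramified piece married to an unramified division algebra of degree $2^{n-1}$ over the residue field of $\k(\theta)$.

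The heart of the argument is the noncyclicity proof. A cyclic $K$-division algebra of degree $2^n$ has a maximal subfield that is a cyclic extension $M/K$ of degree $2^n$. I would analyze such $M$ via its ramification: writing $M$ over $K$, its inertia/ramification behavior is controlled by the residue field $\Q_2$, and a cyclic degree-$2^n$ extension of $K$ is built from a cyclic extension of $\Q_2$ together with a ramified Kummer piece, constrained by the split residue sequence $0 \to {}_n\X(k) \to {}_n\X(K) \to (\mu_n^*)^{\G_k} \to 0$. The key obstruction: for $M$ to split $\delta = \alpha + (\theta,t)$ and be maximal in $D$, the splitting criterion forces strong compatibility between $M$ and both $\k(\theta)$ and $\alpha$; and because $\mu_4 \not\subseteq \Q_2$ while $\mu_2 \subseteq \Q_2$, the possible cyclic $M$ are severely limited — one should be able to show that any cyclic degree-$2^n$ extension of $K$ that contains the ramified quadratic piece $\Q_2(\sqrt d)((t^{1/2}))$-type subfield and splits the unramified part $\alpha$ would descend to a cyclic degree-$2^{n}$ (or at least a cyclic degree-$4$) subextension of a cyclotomically-controlled extension of $\Q_2$ that does not exist. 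Concretely I expect the contradiction to be extracted at degree $4$: the degree-$4$ subfield of $M$ over $K$ would have to be a cyclic extension of $K$ with prescribed ramification splitting $(\theta,t)$ plus a $2$-torsion piece of $\alpha$, and tracing through the residue sequence this would produce a cyclic quartic extension of $\Q_2$ containing $\sqrt{d}$ with $d$ chosen (e.g. $d = -1$, or $d = 2$, $d = 5$) precisely so that $\Q_2(\sqrt d)$ is \emph{not} embeddable in a cyclic quartic — which is governed by Albert's norm criterion ($-1 \in \Nm(\Q_2(\sqrt d)/\Q_2)$?) cited in Section 2.1.

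So the skeleton is: (i) fix $n$, choose $d \in \Q_2^\times$ with $\Q_2(\sqrt d)/\Q_2$ not contained in any cyclic quartic extension, using the norm criterion; (ii) choose $\alpha \in \Br(\Q_2)$ of period $2^{n-1}$ whose restriction to $\Q_2(\sqrt d)$ has period $2^{n-1}$ (possible by LCFT, e.g. with $\alpha^* = \theta^*$ suitably scaled); (iii) set $\delta = \alpha + (\theta, t)$ with $\theta = (d)_2$, and verify $\per(\delta) = \ind(\delta) = 2^n$ via the index formula and the Proposition, so $\delta$ underlies a division algebra $D$ of degree $2^n$; (iv) suppose $D$ is cyclic with maximal subfield $M/K$ cyclic of degree $2^n$; (v) use the splitting criterion and the structure of $\X(K)$ via the residue sequence to show the unique degree-$4$ subfield of $M$ forces a cyclic quartic extension of $\Q_2$ containing $\sqrt d$, contradicting (i). The main obstacle I anticipate is step (v): controlling \emph{all} cyclic maximal subfields of $D$ and pinning down exactly what ramification data a cyclic $M$ can carry — this requires a careful bookkeeping of how cyclic extensions of the complete field $K$ decompose into an unramified part (a cyclic extension of $\Q_2$) and a totally ramified Kummer part, and showing the combination cannot simultaneously satisfy the splitting constraints unless a forbidden cyclic quartic over $\Q_2$ exists. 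Getting the residue/ramification bookkeeping exactly right, rather than merely morally right, is where the work lies; the rest is arithmetic of $\Q_2$ that Albert's norm criterion and LCFT handle cleanly.
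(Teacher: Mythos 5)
Your setup (Witt decomposition $\delta=\alpha+(\theta,t)$, $\theta$ of order $2$, Albert's norm criterion over $\Q_2$) matches the paper's starting point, but the core of your plan, step (v), would fail. The argument that a cyclic maximal subfield $M$ of degree $2^n$ forces a cyclic quartic extension of $\Q_2$ containing $\sqrt d$ only works for $t$-unramified $M$: the splitting criterion of Section 2.2 applies only to unramified extensions, and that case is indeed the easy half (the paper dispatches it in three lines with $(-1,-1)=-1$ and Albert's criterion, taking $d=-1$). The serious candidates are the \emph{ramified} cyclic extensions $E=L(\sqrt{vt})$, where $L/K$ is unramified of degree $2^{n-1}$ with residue field $\bar L$ and $\bar L(\sqrt v)/\Q_2$ is cyclic of degree $2^n$. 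Such $E$ exist in abundance, they need not contain $\k(\theta)=\Q_2(\sqrt d)$ at all, and no forbidden cyclic quartic over $\Q_2$ is produced; so no contradiction of the kind you anticipate is available. Whether such an $E$ splits $\delta$ is not a field-existence question but a Brauer-class computation: $\delta_E=\alpha_E+(-1,v^{-1})_E$, whose index equals that of $\alpha_{\bar L}+(-1,v)_{\bar L}$, and since $\ind(\alpha_{\bar L})=2$, $E$ splits $D$ exactly when the quaternion class $(-1,v)_{\bar L}$ is nontrivial. The missing idea — and the actual heart of the paper's proof — is the Claim that $(-1,v)_{\bar L}=1$ for \emph{every} admissible pair $(\bar L,v)$, proved by corestricting to $(-1,\Nm_{\bar L/\Q_2}(v))$, verifying the $n=2$ case by exhibiting explicit generators (e.g.\ $v=2+\sqrt 2$ over $\Q_2(\sqrt 2)$, whose norm $2=\Nm_{\Q_2(i)/\Q_2}(1+i)$) for the three quadratic fields $\Q_2(\sqrt 2),\Q_2(\sqrt 5),\Q_2(\sqrt{10})$ that do extend to cyclic quartics, and reducing general $n$ to $n=2$ via $v'=\Nm_{\bar L/\bar L'}(v)$. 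Nothing in your sketch supplies this positive norm/Hilbert-symbol statement, so the noncyclicity claim is not established.

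Two smaller points. In step (ii), an $\alpha\in\Br(\Q_2)$ of period $2^{n-1}$ cannot restrict to a class of period $2^{n-1}$ on a quadratic extension: by LCFT the invariant gets multiplied by $2$, so the period drops to $2^{n-2}$, and your $\delta$ would have index $2^{n-1}$, not $2^n$; the paper instead takes $\per(\alpha)=2^n$, so that $\ind(\delta)=2\cdot\ind(\alpha_{\Q_2(i)})=2^n$. Also, your suggested choices $d=2$ and $d=5$ do not have the property you want: $(-1,2)=(-1,5)=1$, so $\Q_2(\sqrt 2)$ and $\Q_2(\sqrt 5)$ \emph{do} embed in cyclic quartics (they are precisely the residue fields $\bar L$ that must be handled in the Claim); only $d\in\{-1,-2,-5,-10\}$ satisfies the nonembeddability condition.
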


\begin{proof}
Set $k=\Q_2$, and let $t$ be a uniformizer for $K$.
Since $k$ does not contain a primitive fourth root of unity,
any character of $2$-power order over $K$ has residue of order dividing $2$,
and the corresponding cyclic extension has $t$-ramification degree dividing $2$.
Similarly, any tamely ramified cyclic field extension of $k$ of $2$-power degree 
has $p$-ramification degree dividing $2$.
Let $\alpha\in\Br(k)$ be an element of order $2^n$ divisible by $4$,
and let $D$ be the $K$-division algebra underlying the Brauer class
\[\delta=\alpha+(-1,t)\]
Then $\ind(D)=\per(\alpha)=2^n$.
$D$ has no $t$-unramified cyclic maximal subfields: Such a field extension would have degree $2^n$
and contain $k(i)$, by the splitting criterion, 
but $(-1,-1)=-1$ by computation of the Hilbert symbol (\cite[III.1.2, Theorem 1]{Se73}), hence $-1$
is not a norm from $k(i)$, hence $k(i)$ is not contained in a cyclic extension of degree $4$ by Albert's norm criterion.
Therefore no such subfield exists.
It follows that $D$ is cyclic if and only if it has a cyclic maximal subfield of $t$-ramification degree $2$.
Such fields have the form
\[E=L(\sqrt{vt})\]
where $L/K$ is of degree $2^{n-1}$ and defined over $k$, and $v\in\O_L^\times$.
Let $\bar L$ denote the residue field of $L$. Since the group of principal units of $\O_L$ is divisible, 
we may assume $v\in\bar L^\times$.
Let $\chi\in\X(K)$ be a character for $E/K$,
and let 
\[\psi=\chi-(t)_2\] 
Then $\k(\psi)=\bar L(\sqrt v)$ is a cyclic extension of $k$ of degree $2^n$.
Conversely if $\bar L(\sqrt v)/k$ is cyclic of degree $2^n$,
then $E=L(\sqrt{vt})$ is cyclic over $K$.
Therefore for our purposes $\bar L/k$ could be any cyclic extension of degree $2^{n-1}$
that extends to a cyclic extension of degree $2^n$.

Let $\tau=\sqrt{vt}$.
Compute \[\delta_E=\alpha_E+(-1,v^{-1}\tau^2)_E=\alpha_E+(-1,v^{-1})_E\]
Since $E/L$ is totally $t$-ramified, and $\alpha_E$ and $(-1,v^{-1})_E$ are defined over $\bar L$, 
the index of this class is the same as that of $\alpha_{\bar L}+(-1,v)_{\bar L}$
(see \cite[Lemma 1]{Br95}).
Therefore \[\ind(\delta_E)=\ind(\alpha_{\bar L}+(-1,v)_{\bar L})\]
Since $[\bar L:k]=2^{n-1}$, $\ind(\alpha_{\bar L})=2$ by the local theory.
Since $\Br(\bar L)\isom\Q/\Z$, any two classes of order $2$ are inverse to each other.
Therefore $D$ is cyclic via $E/K$ if and only if $(-1,v)_{\bar L}=-1$.

\noindent
{\it Claim}: $(-1,v)_{\bar L}=1$. 
It follows immediately that $\delta_E$ is nonzero,
and since $E/K$ is arbitrary, that $D$ is noncyclic.
By LCFT the corestriction from $\Br(\bar L)$ to $\Br(k)$
is injective, and since $-1\in k$, $\cor_{\bar L/k}(-1,v)_{\bar L}=(-1,\Nm_{\bar L/k}(v))$ by the cup product
formula for corestriction.
Therefore to prove the claim it suffices to show $(-1,\Nm_{\bar L/k}(v))=1$.

The claim applies to all possible $\bar L/k$, namely all
$\bar L/k$ of degree $2^{n-1}$ that extend to cyclic extensions of degree $2^n$,
and we must test all $\bar L$ and $v\in \bar L$ such that $\bar L(\sqrt v)/k$ is cyclic.
We first show that if the claim is true on a given $\bar L$ for one such $v$ then it is true for all $v$.
Suppose $\k(\psi)=\bar L(\sqrt v)$ is cyclic as above.
If $\psi'\in\X(k)$ is a character of order $2^n$ such that $2\psi'=2\psi$,
then $\psi'=\psi+(w)_2$ for some $w\in k$, and since $\psi_{\bar L}=(v)_2$, $\psi'_{\bar L}=(vw)_2$.
The corresponding $\chi'=\psi'+(t)_2\in\X(K)$ yields the cyclic extension $E'=L(\sqrt{vwt})$,
and $\delta_{E'}=(\alpha+(-1,t))_{E'}=\alpha_{E'}+(-1,vw)_{E'}$. 
Since $E'/L$ is totally ramified, the index of $\delta_{E'}$ can again be computed over $\bar L$,
and since $w\in k$,
$\cor_{\bar L/k}(-1,vw)_{\bar L}=(-1,\Nm_{\bar L/k}(v)w^{2^{n-1}})=(-1,\Nm_{\bar L/k}(v))$.
Thus if the claim is true for $\bar L$ and $v\in \bar L$ then $(-1,\Nm_{\bar L/k}(v))=1$ 
for any $v$.

We first prove the claim for $n=2$, i.e., $D$ of degree $4$. Then $[\bar L:k]=2$. We have
\[\H^1(\Q_2,\mu_2)=\{(\pm 1)_2, (\pm 2)_2, (\pm 5)_2, (\pm 10)_2\}\] by \cite[II.3.3, Corollary]{Se73},
hence $\Q_2$ has 7 nontrivial quadratic extensions.
By the computation of the Hilbert symbol (\cite[III.1.2, Theorem 1]{Se73}), $(-1,2)=(-1,5)=(-1,10)=1$ and $(-1,-1)=-1$,
and $(-1,-2)=(-1,-5)=(-1,-10)=-1$.
Therefore the candidates for $\bar L$ are
$\Q_2(\sqrt 2),\Q_2(\sqrt 5)$, or $\Q_2(\sqrt{10})$, by Albert's criterion.
Now:
\begin{enumerate}[\rm(i)]
\item
If $\bar L=\Q_2(\sqrt 2)$, let $v=2+\sqrt 2$.
Then $\Nm_{\bar L/\Q_2}(v)=2=\Nm_{\Q_2(i)/\Q_2}(1+i)$.
\item
If $\bar L=\Q_2(\sqrt 5)$, let $v=5+2\sqrt 5$.
Then $\Nm_{\bar L/\Q_2}(v)=5=\Nm_{\Q_2(i)/\Q_2}(2+i)$.
\item
If $\bar L=\Q_2(\sqrt{10})$, let $v=10+\sqrt{10}$.
Then $\Nm_{\bar L/\Q_2}(v)=90=\Nm_{\Q_2(i)/\Q_2}(9+3i)$.
\end{enumerate}
It is easy to check, using \cite[VI, Exercise 8]{L} for example, that each
$\bar L(\sqrt v)/\Q_2$ is cyclic of degree $4$.
We conclude that $(-1,\Nm_{\bar L/\Q_2}(v))=1$, hence $(-1,v)_{\bar L}=1$,
for all quadratic extensions $\bar L/\Q_2$
extendable to degree-$4$ cyclic extensions.
Therefore $D$ is noncyclic. 

For the general case, we will reduce to the $n=2$ case, as follows.
Let $\bar L'/k$ be the quadratic subextension of $\bar L$, 
and let $v'=\Nm_{\bar L/\bar L'}(v)$.
Then $v'$ is a product of the Galois conjugates of $v$ over $\bar L'$,
each of which can replace $v$ as an element whose square root generates $\bar L(\sqrt v)$, since $\bar L(\sqrt v)$
is a splitting field over $\bar L'$.
Since $\sqrt{v'}$ is the product of these square roots, it is contained in $\bar L$,
hence $\bar L'(\sqrt{v'})/k$ is a subextension of $\bar L/k$.

The element $v'$ is not a square in $\bar L'$, hence $\bar L'(\sqrt{v'})/k$ is cyclic of degree four.
To see this, let $\omega^*\in(\mu_n^*)^{\G_k}$ be the element of order $2$, and let $\res$ and $\cor$ be
restriction and corestriction for $\bar L/\bar L'$.
We have $(v')_{2,\bar L'}=\cor_{\bar L/\bar L'}((v)_{2,\bar L})$,
since the corestriction is induced by the norm, hence $\cor(\omega^*\circ(v)_{2,\bar L})=\omega^*\circ(v')_{2,\bar L'}$.
Since $\res(\psi_{\bar L'})=\omega^*\circ(v)_{2,\bar L}$, and
$\cor\circ\res$ is multiplication by $[\bar L:\bar L']=2^{n-2}$, we conclude
$\omega^*\circ(v')_{2,\bar L'}=2^{n-2}\psi_{\bar L'}$ 
is a character of order $2$, hence $v'$ is not a square in $\bar L'$.

Since $\bar L'(\sqrt{v'})/k$ is cyclic of degree $4$,
$(-1,v')=(-1,\Nm_{\bar L'/k}(v'))=1$ by the $n=2$ case of the claim.
But since $v'=\Nm_{\bar L/\bar L'}(v)$, we have
$\Nm_{\bar L'/k}(v')=\Nm_{\bar L/k}(v)$, so $(-1,\Nm_{\bar L/k}(v))=1$.
This proves the claim in general, and we conclude that $D$ is noncyclic in general.
\end{proof}

\begin{Remark}
It is not hard to show that 
the noncyclic division algebra of degree $2^n$ constructed above has a cyclic splitting field of degree $2^{n+1}$.
In fact, set $\zeta_j=\zeta_{2^j}$, and let $\bar E_j$, $j\geq 0$, be the subfield of $\Q_2(\zeta_{j+2})$ 
fixed by the automorphism $\zeta_{j+2}\mapsto\zeta_{j+2}^{-1}$.
Then $\bar E_j/\Q_2$ is cyclic of degree $2^j$, and $\bar E_j=\Q_2(\eta_j)$, 
where $\eta_j=\zeta_{j+2}+\zeta_{j+2}^{-1}$.
The sequence 
$\{\eta_j:j=0,\dots,\}$ is recursively definable by $\eta_0=0$ and $\eta_{j+1}^2=2+\eta_j$, and
it is easy to prove by induction that $\Nm_{\bar E_{j+1}/\Q_2}(\eta_{j+1})=2$ for $j\geq 1$.
Now $E=E_n(\sqrt{\eta_n t})$ is a cyclic
extension of $K$ of degree $2^{n+1}$, where $E_n=\bar E_n K$. 
Since $\Nm_{\bar E_n/\Q_2}(\eta_n)=2=\Nm_{\Q_2(i)/\Q_2}(1+i)$, we compute $(-1,\eta_n)=1$, 
and since $\bar E_n$ splits $\alpha$, we easily verify that $E$ splits $D$.
\end{Remark}

\begin{Theorem}\label{rest}
Let $K$ be a complete discretely valued field with residue field $\Q_p$,
and let $D$ be a $K$-division algebra.
Suppose either $\ind(D)$ is odd, or $p$ is odd.
Then $D$ is cyclic.
\end{Theorem}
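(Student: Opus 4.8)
The plan is to use the Witt decomposition $\delta = \alpha + (\theta,t)$ for the Brauer class of $D$, with $\alpha\in\Br(\Q_p)$ and $\theta\in\X(\Q_p)$, and to exhibit an honest cyclic maximal subfield. Write $\ind(D) = n$. We know from the index formula that $n = \lcm\{|\theta|, \per(\alpha_{\k(\theta)})\cdot|\theta|/\dots\}$; more precisely $\ind(D) = |\theta|\,\ind(\alpha_{\k(\theta)})$ and by LCFT $\ind(\alpha_{\k(\theta)}) = \per(\alpha)/\gcd\{\per(\alpha),|\theta|\}$. The strategy is to replace $\theta$ by a character $\chi\in\X(K)$ of order exactly $n$ whose associated cyclic extension $E = \k(\chi)$ splits $\delta$, so that $E$ is a cyclic maximal subfield of $D$. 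We look for $\chi$ with residue part $\psi$ and ramification part $\omega^*\circ(t)_e$ as in the split residue sequence for characters; the goal is to choose $\psi$ over $\Q_p$ dominating $\theta$ and lifting $\alpha$ appropriately.

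First I would handle the unramified/odd-degree mechanism. When $\ind(D)$ is odd, $\Q_p$ contains no nontrivial odd-order roots of unity obstruction of the kind used in Theorem \ref{main} — indeed the relevant obstruction there was $(-1,-1)=-1$, a strictly $2$-primary phenomenon. More concretely, for the prime-to-$2$ part the argument is: over a local field $k=\Q_p$, a character $\theta$ of $\ell$-power order ($\ell$ odd, or $\ell=2$ with $p$ odd) extends to a character of any larger $\ell$-power order, by the criterion recalled at the end of the local-fields subsection ($\theta^*(\zeta_m)=0$ is automatic because $\mu(k)^*$ has order prime to $\ell$ when $\ell\neq p$, and when $\ell = p$ one uses that $\mu_p(k)$ is trivial since $p$ is odd and $\zeta_p\notin\Q_p$). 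Using this, I would boost $\theta$ up to a character $\psi$ over $\Q_p$ of order $n/\ind(\alpha_{\k(\theta)})\cdot\dots$ — chosen so that $\k(\psi)$ simultaneously contains $\k(\theta)$ and splits $\alpha$ (possible because $\Br(\k(\theta))\cong\Q/\Z$, so a cyclic extension of $\k(\theta)$ of the right local degree splits $\alpha_{\k(\theta)}$, and one arranges this cyclic-over-cyclic tower to itself be cyclic over $\Q_p$ using the divisibility/extendability just cited). Then $\chi = \psi + (\text{adjustment in }t)$ of order $n$ gives the desired subfield. I would reduce to prime-power $n$ by a primary decomposition, since a product of cyclic algebras of coprime degrees over $K$ with maximal subfields is again cyclic.

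The main obstacle is the prime-$p$ part when $p$ is odd and $p\mid\ind(D)$: here $\alpha$ can have $p$-primary part and one must check that the cyclic extension of $\Q_p$ used to split it can be chosen of the exact degree needed and compatible with the ramified piece $(\theta,t)$ — essentially verifying there is no analogue of Albert's norm obstruction. The key point to nail down is that over $\Q_p$ with $p$ odd, every character of $p$-power order does extend to arbitrarily large $p$-power order (equivalently $\zeta_p\notin\Q_p$ forces $\mu_p(\Q_p)=1$, so the only obstruction $\theta^*(\zeta_m)$ vanishes), so the tower-building above never stalls; combined with $\Br$ of a local field being $\Q/\Z$ — so that \emph{any} central simple algebra over a local field is cyclic — the class $\alpha$ itself is cyclic over $\Q_p$, and one splices its cyclic splitting data with $(\theta,t)$ after passing to a common cyclic extension of $\Q_p$ of $t$-unramified type. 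I expect the bookkeeping ensuring the composite field has the exact degree $\ind(D)$ (not larger) and is genuinely cyclic over $K$ — rather than any deep arithmetic — to be where the care is required.
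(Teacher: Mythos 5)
There is a genuine gap, and it sits exactly where the difficulty of the theorem lies. Your key claim --- that for $\ell\neq p$ the obstruction $\theta^*(\zeta_m)$ vanishes automatically because ``$\mu(k)^*$ has order prime to $\ell$'' --- is false: $|\mu(\Q_p)|=p-1$ for odd $p$, and this is divisible by $\ell$ whenever $\ell\mid p-1$ (always for $\ell=2$, and e.g.\ $\ell=3$, $p=7$). So a ramified character $\theta$ of $\ell$-power order with $\zeta_m\notin\Nm(\k(\theta)/k)$ need not extend to a character of order $n$ over $\Q_p$. When that happens there is \emph{no} $t$-unramified cyclic maximal subfield at all, since a $t$-unramified cyclic splitting field of degree $n$ would contain $\k(\theta)$ and hence force $\theta$ to extend; your boosting/splicing strategy, which ultimately always asks for a cyclic extension of $\Q_p$ of degree $n$ containing $\k(\theta)$ and splitting $\alpha$, therefore cannot get off the ground in this case. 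Note also that the splitting criterion you lean on is stated only for $t$-unramified extensions, so even the ``adjustment in $t$'' step would need the index computations you omit.

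The paper's proof spends essentially all of its effort on precisely this case: it builds a $t$-\emph{ramified} cyclic maximal subfield $E=L(\root e\of{vt})$, where $e$ is the ramification degree of $\k(\theta)/k$, $\bar L=k(\zeta_{mn/e})$ is unramified of degree $n/e$, and $v$ is a root of unity chosen so that $\Nm_{\bar L/k}(v)=\zeta_m^{j}$ with $\inv_k(\theta,\zeta_m^j)=i/e$ cancelling $\inv_{\bar L}(\alpha_{\bar L})=i/e$; cyclicity of $E/K$ and $\delta_E=0$ are then checked by an explicit corestriction/invariant computation, and a separate construction (with $v=\zeta_{2^{s+r-2}}$, $\Nm_{\bar L/k}(v)=-1$, and Albert's norm criterion) handles $\ell=2$, $p\equiv 3\pmod 4$. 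This is not bookkeeping one can defer: it is exactly the mechanism whose failure at $p=2$ produces the noncyclic algebras of Theorem~\ref{main}, so any correct proof must engage with it. The parts of your sketch that do work --- reduction to prime-power index, the case $\theta=0$, and the case where $\theta$ extends to order $n$ (e.g.\ $\ell\nmid p-1$ with $\ell\neq p$, or $\ell=p$ odd via $p$-divisibility of $\X(k)(p)$) --- coincide with the easy branches of the paper's argument.
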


\begin{proof}
Assume the hypotheses.
Set $k=\Q_p$, and let $m=|\mu(k)|$. 
Let $D$ be a $K$-division algebra of degree $n$, with Brauer class
$\delta=\alpha+(\theta,t)$ for $\alpha\in\Br(k)$ and $\theta\in\X(k)$.
If $\theta=0$ then $D$ is defined over the local field $k$, and is cyclic by local field theory.
Therefore suppose $\theta\neq 0$.
We may assume $n=\ell^a$ for some prime $\ell$ and $a\in\N$ by prime-power decomposition.
If $\k(\theta)$ extends to a ($t$-unramified) cyclic extension $E/k$ of degree $n$, then 
$E$ contains $\k(\theta)$ and splits $\alpha$, hence $D$ is cyclic via $E$ by the splitting principle.
Therefore we assume for the remainder of the proof that $1<|\theta|<n$, and $\theta$
does not extend to a character of order $n$.
Since $\ind(\delta)=n$ and $|\theta|<n$, it follows that $\ind(\alpha)=n$,
hence $\inv_k(\alpha)=i/n$ for some $i:\gcd(i,n)=1$.

Since it does not extend to a cyclic extension of degree $n$, $\k(\theta)/k$ is $p$-ramified, 
and $e=|\theta^*(\zeta_m)|$ is the smallest number such that $e\,\theta$ is
extendable to a character of any larger $\ell$-power order, hence
$e$ is the order of $\zeta_m$ in $k^\times/\Nm(\k(\theta)/k)$,
by Section~\ref{background}.
Evidently $e:1<e\leq|\theta|$.

By hypothesis, at least one of the numbers $\ell,p$ is odd, and
our assumption on the non-extendability of $\theta$ implies that $\ell\neq p$:
For if $\ell=p$, then $p$ is odd, so $K$ contains no $p$-th roots of unity, 
and the $p$-primary subgroup $\X(k)(p)$ is $p$-divisible by LCFT,
contradicting the non-extendability of $\theta$.
Therefore $\ell\neq p$.

Suppose $\ell\neq p$, and if $\ell=2$ then $p\equiv 1\pmod 4$.
We will construct a cyclic maximal subfield of $D$.
In this case $\k(\theta)/k$ is tame, $e$ is 
its ramification degree, and $\k(e\theta)$ is its maximal $p$-unramified subextension.
Since $\ell$ divides $e$ divides $m$,
and since $\zeta_m$ generates $\mu(k)$, $\zeta_m$ is not an $\ell$-th 
power in $k^\times$. If $\ell=2$ then $\zeta_m$
is not in $-4k^{\times 4}$ since $p\equiv 1\pmod 4$. 
Therefore $x^n-\zeta_m$ is irreducible by \cite[VI.9]{L}, with root $\zeta_{mn}$,
and the field $\bar L=k(\zeta_{mn/e})$ is cyclic and $p$-unramified of degree $n/e$ over $k$.
Since $\zeta_m$ has order $e$ modulo $\Nm(\k(\theta)/k)$, we can find a $j:\gcd(j,e)=1$ so that
$\inv_k(\theta,\zeta_m^j)=i/e$, where $i$ is as above.

Let 
\[v=\begin{cases}\zeta_{mn/e}^j&\text{if $\ell$ is odd}\\
\zeta_{mn/e}^{j(m/2+1)}&\text{if $\ell=2$}\end{cases}\]
Since the exponent is prime-to-$\ell$ in each case, $k(v)=k(\zeta_{mn/e})$.
This $v$ is designed to have norm $\zeta_m^j$:
When $\ell$ is odd, $v$ is a root of $x^{n/e}-\zeta_m^j$, so $\Nm_{k(v)/k}(v)=\zeta_m^j$.
When $\ell=2$, $v$ is a root of $x^{n/e}-\zeta_m^{j(m/2+1)}$, and since $4|m$, we compute 
$\Nm_{k(v)/k}(v)=-\zeta_m^{j(m/2+1)}=(-1)^{1+jm/2}\zeta_m^j=\zeta_m^j$.

Put $\bar L=k(v)$, $L=K(v)$.
Then $\bar L(\root e\of v)=k(\zeta_{mn})$ is cyclic of degree $n$ over $k$.
Let $\tau$ be a root of $x^e-vt\in L[t]$. Then we claim
\[E=L(\tau)\]
is a cyclic maximal subfield of $D$.
It is cyclic over $K$ as the cyclic extension of a sum $\phi+\omega^*\circ(t)_e$ as in Section~\ref{dvfbackground},
where $\phi\in\X(k)$ is such that $\k(\phi)=k(\zeta_{mn})/k$ and $\phi_{\bar L}=\omega^*\circ(v)_e$.
It remains to show $E$ is a maximal subfield of $D$.
Since $t=\tau^e/v$, 
\[\delta_E=\alpha_E+(\theta_E,\tau^e)+(\theta_E,1/v)\]
Since $\bar L/k$ is $p$-unramified, 
and since $\k(e\theta)$ is the maximal $p$-unramified subextension of $\k(\theta)/k$,
$|\theta_E|=|\theta_{\bar L}|=e$, so $(\theta_E,\tau^e)=0$, and
since $E/L$ is $t$-totally ramified, $\ind(\delta_E)=\ind(\alpha_{\bar L}+(\theta_{\bar L},1/v))$.

Since $[\bar L:k]=n/e$ and $\inv_k(\alpha)=i/n$, $\inv_{\bar L}(\alpha_{\bar L})=i/e$ by the local theory.
Since \[\cor_{\bar L/k}((\theta_{\bar L},1/v))=(\theta,\Nm_{\bar L/k}(1/v))=(\theta,\zeta_m^{-j})\]
by the cup product formula, and
since the corestriction is the identity on invariants by the local theory, 
we have $\inv_{\bar L}(\theta_{\bar L},1/v)=\inv_k(\theta,\zeta_m^{-j})=-i/e$.
Therefore $\inv_{\bar L}(\alpha_{\bar L}+(\theta_{\bar L},1/v))=i/e-i/e=0$, hence
$\delta_E=0$. Since $\ind(\delta)=[E:K]=n$, we conclude $E$ is a maximal subfield of $D$,
so $D$ is cyclic.

It remains to prove it for $\ell=2$ and $p\equiv 3\pmod 4$.
Set $\ind(D)=n=2^r$. If $r=1$ then $D$ is already known to be cyclic, so assume $r\geq 2$. 
We have $\ind(\alpha)=2^r$ and $1<|\theta|<2^r$, and $\theta$ does not extend to a character of order $2^r$,
hence $\k(\theta)$ is $p$-ramified. Since $p\equiv 3\pmod 4$, all characters of $2$-power order
have residue of order at most $2$ by the sequence in Section~\ref{dvfbackground}, so
$\k(\theta)/k$ has ramification degree $2$, and  
$\theta$ does not extend to a character of larger $2$-power degree.
Therefore $-1\not\in\Nm(\k(\theta)/k)$ by Albert's criterion.

Let $s=v_2(p^2-1)$ be the (additive) $2$-value.
Then $\zeta_{2^s}$ is the largest $2$-power root of unity in $k(i)$.
Let $v=\zeta_{2^{s+r-2}}$, and set $\bar L=k(v)$, $L=K(v)$, and
\[E=L(\sqrt{vt})\]
By the local theory, $\bar L/k$ is cyclic unramified of degree $2^{r-1}$, $\theta_{\bar L}$ has order $2$,
and $\Nm_{\bar L/k(i)}(v)=-\zeta_{2^s}$. 
The Galois conjugate of $-\zeta_{2^s}$ over $k$ is $-\zeta_{2^s}^p$, the image under Frobenius, 
so $\Nm_{\bar L/k}(v)=\zeta_{2^s}^{p+1}$. We compute $v_2(p+1)=s-1$ since $p\equiv 3\pmod 4$,
therefore $\zeta_{2^s}^{p+1}$ has order $2$, therefore $\Nm_{\bar L/k}(v)=-1$.

An argument similar to the previous case shows $E/K$ is cyclic of degree $2^r$, and
\[\ind(\delta_E)=\ind(\alpha_{\bar L}+(\theta,v)_{\bar L})\]
Since $\Nm_{\bar L/k}(v)=-1$ is not a norm from $\k(\theta)$, $\ind((\theta,v)_{\bar L})=\ind((\theta,\Nm_{\bar L/k}(v)))\neq 1$,
and since $\theta_{\bar L}$ has order $2$, $\ind((\theta,v)_{\bar L})=2$.
Since $[\bar L:k]=2^{r-1}$ we compute
$\ind(\alpha_{\bar L})=2$ by the local theory, and since in $\Br(k)$ any two classes
of order $2$ are inverse to each other, we conclude $\alpha_{\bar L}+(\theta,v)_{\bar L}=0$.
Therefore $\ind(\delta_E)=1$,
so $D$ is split by $E$, hence $D$ is cyclic.
This completes the proof.
\end{proof}

\begin{Corollary}
Let $K$ be a complete discretely valued field with residue field $k=\Q_p$. 
Then the $2^r$-cyclic length of $K$ is two if $p=2$ and $4$ divides $2^r$;
the $n$-cyclic length of $K$ is one if $p=2$ and $n$ is odd or $n=2$;
and the cyclic length of $K$ is one if $p\neq 2$.
\end{Corollary}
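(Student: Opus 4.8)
The plan is to deduce all three assertions from the Proposition together with Theorems~\ref{main} and~\ref{rest}, after two routine preliminaries.

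First, via the split residue sequence of Section~\ref{dvfbackground} the group $\Br(K)$ contains the inflation of $\Br(\Q_p)\cong\Q/\Z$, so it has a class of every finite order; hence for every $n\geq 2$ the $n$-cyclic length is nonzero, and the cyclic length is the supremum of the $n$-cyclic lengths over $n\geq 2$. Second, if $\delta\in\Br(K)$ has period $n$ then $\ind(\delta)=n$: the Proposition gives Brauer dimension one, so $\ind(\delta)\mid n$, while always $n\mid\ind(\delta)$. Consequently a degree-$n$ division algebra automatically has Brauer class of period $n$, and the $n$-cyclic length of $K$ equals $1$ if and only if every $K$-division algebra of degree $n$ is cyclic — a single cyclic class $\theta\cup(b)_n$ with $|\theta|=n$ represents a cyclic algebra of degree $n=\ind$, hence is the underlying division algebra, and conversely a cyclic division algebra of degree $n$ has maximal subfield $\k(\theta)$ with $|\theta|=n$ and class of the form $\theta\cup(b)_n$.

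I would then argue case by case. If $p=2$ and $4\mid 2^r$: the Proposition bounds the $2^r$-cyclic length by $2$, while Theorem~\ref{main} supplies a noncyclic $K$-division algebra of degree $2^r$, whose class has period $2^r$, so by the second preliminary the $2^r$-cyclic length exceeds $1$; hence it is $2$. If $p=2$ and $n$ is odd: a class of period $n$ has index $n$, so its underlying division algebra has odd degree and is cyclic by Theorem~\ref{rest}, giving $n$-cyclic length $1$. If $p=2$ and $n=2$: Theorem~\ref{rest} does not literally apply, but a degree-$2$ division algebra has a separable quadratic, hence cyclic, maximal subfield, so it is cyclic and the $2$-cyclic length is $1$. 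If $p\neq 2$: every $K$-division algebra is cyclic by Theorem~\ref{rest}, so the $n$-cyclic length is $1$ for every $n\geq 2$, and the cyclic length, their supremum, is $1$.

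There is no real obstacle: the substance lies entirely in the Proposition and the two Theorems, and the corollary only repackages "(non)cyclicity of the underlying division algebra" as a statement about $n$-cyclic length, the translation being legitimized by the fact that Brauer dimension one forces period $n$ to imply index $n$. The one spot requiring a separate word is the pair $(p,n)=(2,2)$, which sits just outside the hypotheses of Theorem~\ref{rest} and is handled by the elementary observation that algebras of degree $2$ are cyclic.
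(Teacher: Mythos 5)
Your proposal is correct and follows essentially the same route as the paper: the upper bound of two comes from the Proposition (i.e.\ the Witt decomposition), the lower bound from the noncyclic algebras of Theorem~\ref{main}, and the remaining cases from Theorem~\ref{rest}, with the period-equals-index translation between ``cyclicity of the degree-$n$ division algebra'' and ``$n$-cyclic length one'' made explicit where the paper leaves it implicit. The only cosmetic difference is the case $n=2$, where the paper extracts a quadratic splitting field from the Witt decomposition while you invoke the standard fact that degree-$2$ algebras are cyclic; both are routine and correct.
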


\begin{proof}
Since all division algebras over the fields $\Q_2$ and $\Q_p$ are cyclic,
the Witt decomposition of ${}_{2^r}\Br(K)$ shows the $2^r$-cyclic length is at most two.
Theorem~\ref{main} shows the $2^r$-cyclic length is at least two for $\Q_2$ and $r\geq 2$.
If $2^r=2$ then the Witt decomposition shows that in all cases there is a quadratic splitting field,
hence the cyclic length is one.
The rest is Theorem~\ref{rest}.
\end{proof}

The {\it symbol length} does not detect the examples of Theorem~\ref{main}, because 
symbol length is only defined in circumstances where the examples don't exist:

\begin{Theorem}
Let $K$ be a complete, discretely valued field with residue field a local field $k$
of characteristic zero, and suppose $k$ contains $\mu_n$.
Then every $K$-division algebra of degree $n$ is cyclic.
\end{Theorem}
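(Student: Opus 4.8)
The plan is to follow the architecture of Theorems~\ref{main} and~\ref{rest}: reduce to prime-power degree, apply the Witt decomposition, dispose of the ``unramified'' case by the splitting criterion, and in the ``ramified'' case construct an explicit cyclic maximal subfield of $D$, this time using $\mu_n\subseteq k$ to annihilate the norm-theoretic obstruction that produces the noncyclic algebras of Theorem~\ref{main}.

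First I would reduce to $n=\ell^a$ a prime power. Writing $D$ as the tensor product over $K$ of its $\ell$-primary components $D_\ell$, each of degree the $\ell$-part $\ell^{a_\ell}$ of $n$, we have $\mu_{\ell^{a_\ell}}\subseteq\mu_n\subseteq k$, and the compositum of cyclic maximal subfields of the $D_\ell$ (cyclic extensions of $K$ of pairwise coprime degree) is a cyclic maximal subfield of $D$; so it suffices to treat each $D_\ell$. Assume then $n=\ell^a$, fix a uniformizer $t$, and write the Witt decomposition $\delta=\alpha+(\theta,t)$ with $\alpha\in\Br(k)$, $\theta\in\X(k)$. The argument of the opening Proposition uses only Local Class Field Theory over the local field $k$, so $K$ has Brauer dimension one; hence $\per(\delta)=\ind(\delta)=n$ and $n=|\theta|\,\ind(\alpha_{\k(\theta)})$ by the index formula.

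If $\theta=0$, then $D$ is defined over the local field $k$ and is cyclic. If $\theta$ extends to a character of order $n$ over $k$, then $\k(\theta)$ embeds in a cyclic extension $M/k$ of degree $n$ which splits $\alpha$ (since $\Br$ of a local field is $\Q/\Z$ and $M/\k(\theta)$ is cyclic of degree $n/|\theta|\geq\ind(\alpha_{\k(\theta)})$); by the splitting criterion $M$ splits $\delta$, so $MK$ is a cyclic maximal subfield of $D$. Otherwise $\theta$ does not extend to order $n$, which forces $1<|\theta|<n$, $\ind(\alpha)=n$, and $\k(\theta)/k$ ramified; let $e=|\theta^*(\zeta_m)|$ be the order of $\zeta_m$ modulo $\Nm(\k(\theta)/k)$, so $1<e\leq|\theta|$ and $\mu_e\subseteq k$. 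Mimicking Theorem~\ref{rest}, I would pick a root of unity $v$ generating an unramified --- hence cyclic --- degree-$n/e$ extension $\bar L=k(v)$ of $k$ such that (i) $\bar L(\root e\of v)/k$ is cyclic of degree $n$, verified as in the general case of Theorem~\ref{main} by the restriction/corestriction character computation, which is available because $\mu_n\subseteq k$; and (ii) $\Nm_{\bar L/k}(v)$ is a power of $\zeta_m$ chosen so that $\inv_{\bar L}(\theta_{\bar L},v^{-1})=-\inv_{\bar L}(\alpha_{\bar L})$. Setting $L=K(v)$ and $E=L(\tau)$ with $\tau^e=vt$, the extension $E/K$ is cyclic of degree $n$, being $\k(\chi)$ for $\chi=\phi+\omega^*\circ(t)_e$ with $\k(\phi)=\bar L(\root e\of v)$; over the totally $t$-ramified $E/L$ one computes $\delta_E=\alpha_E+(\theta_E,v^{-1})$, whence $\ind(\delta_E)=\ind(\alpha_{\bar L}+(\theta_{\bar L},v^{-1}))$ by \cite[Lemma~1]{Br95}, and since $\cor_{\bar L/k}(\theta_{\bar L},v^{-1})=(\theta,\Nm_{\bar L/k}(v)^{-1})$ and corestriction is the identity on invariants over $\bar L$, property (ii) forces $\delta_E=0$. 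Thus $E$ is a cyclic maximal subfield of $D$. The essential point is that the obstruction of Theorem~\ref{main} --- $i\notin k$, so $-1\notin\Nm(k(i)/k)$ --- cannot arise when $\mu_n\subseteq k$, so a $v$ with properties (i) and (ii) exists; equivalently, one could phrase the theorem as the assertion that every index-$n$ class of ${}_n\Br(K)$ is a single symbol $(x,y)_n$, with $K(\root n\of x)$ the cyclic maximal subfield.

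The main obstacle is the wildly ramified subcase $\ell=p$, where $p$ is the residue characteristic of $k$. This cannot occur in the setting of Theorem~\ref{rest}, but it can occur here because $\mu_n\subseteq k$ permits $\ell=p$ --- for instance $k=\Q_p(\zeta_{p^a})$. In that subcase the coincidence (for tame $\k(\theta)/k$) of the ramification degree of $\k(\theta)/k$ with the extension defect $e=|\theta^*(\zeta_m)|$ is no longer automatic, and one must instead control the $t$-ramification of the Kummer extensions $k(\root e\of u)/k$ for units $u$ directly, and re-establish the cyclicity of $\bar L(\root e\of v)/k$ by hand, relying on the full Kummer theory and Hilbert symbol available because $\mu_n\subseteq k$, together with the local norm computations. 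A secondary, routine point is that the index formula and $\per=\ind$ hold verbatim over an arbitrary local residue field, which is immediate from Local Class Field Theory.
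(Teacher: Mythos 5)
Your middle case already contains the whole theorem, and the elaborate branch you build after it is both unnecessary and, as written, incomplete. The key point you miss is that under the hypothesis $\mu_n\subseteq k$ the ``otherwise'' branch is vacuous: since $k$ is a local field containing $\mu_n$, Kummer theory together with local class field theory (e.g.\ the decomposition $k^\times\isom\mu(k)\times\Z\times\Z_p^N$, which gives ${}_n\X(k)\isom\Z/n\times\Z/n\times(\Z/p^{v_p(n)})^N$) shows that \emph{every} character $\theta$ of order dividing $n$ extends to a character $\psi$ of order $n$ over $k$. Hence the $t$-unramified cyclic extension $\k(\psi)K/K$ of degree $n$ always exists, contains $\k(\theta)$, splits $\alpha$ (its residue degree over $k$ is $n$, a multiple of $\ind(\alpha)$), and so splits $\delta$ by the splitting criterion; it is then a cyclic maximal subfield of $D$. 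This two-line observation is the paper's entire proof, and it is exactly the step you sketch in your ``if $\theta$ extends to order $n$'' case --- you simply never notice that the hypothesis forces that case to hold always, which is the precise sense in which $\mu_n\subseteq k$ kills the obstruction of Theorem~\ref{main} ($i\notin\Q_2$, so $(-1)_2$ does not extend to order $4$).

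Because of this, your attempt to replay the ramified construction of Theorem~\ref{rest} is not needed; but more importantly, as a proof it has a genuine gap where you rely on it. You correctly observe that the wild case $\ell=p$ can occur here (e.g.\ $k=\Q_p(\zeta_{p^a})$) and is not covered by the tame computations of Theorem~\ref{rest} (where $e$ is the ramification degree of $\k(\theta)/k$ and $\bar L$ is generated by roots of unity); you then defer it as ``the main obstacle,'' proposing to ``control the $t$-ramification\dots by hand'' without actually doing so. A proof cannot end with a promissory note on its hardest case, so the submission is incomplete as it stands. The fix is not to carry out that wild analysis but to delete the entire ramified branch: the extendability of $\theta$ to order $n$, which you treat as a special case, is automatic when $\mu_n\subseteq k$, and with it the theorem follows immediately.
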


\begin{proof}
Let $t$ be a uniformizer for $K$.
As above, the Brauer class of a division algebra $D$ of degree $n$ has the form $\delta=\alpha+(\theta,t)$,
where $\alpha\in\Br(k)$ and $\theta\in\X(k)$ have order dividing $n$.
Since $k$ contains $\mu_n$, $\theta$ extends to a character $\psi$ of order $n$
by Kummer theory,
and the $t$-unramified cyclic extension $\k(\psi)/K$ then contains $\k(\theta)$ and splits $\alpha$, 
hence it splits $\delta$.
Therefore $D$ is cyclic.
\end{proof}

\begin{Example}
Let $K=k((t))$ for $k=\Q_3(i)$, and let $n$ be a power of a prime $\ell$.
We show how to prove every $K$-division algebra is cyclic.
We have $|\mu(k)|=8$, and 
so 
\[\H^1(k,\mu_n)\isom\begin{cases}
\Z/n\times\Z/\gcd(n,8)&\text{if $\ell\neq 3$}\\
\Z/n\times\Z/n\times\Z/n&\text{if $\ell=3$}
\end{cases}\]
Now arguments identical to those in the proof of Theorem~\ref{rest} show
that every $K$-division algebra is cyclic.
\end{Example}

\bibliographystyle{abbrv} 
\bibliography{hnx.bib}

\end{document}